\theoremstyle{plain}
\newtheorem{theorem}{Theorem}
\newtheorem{coro}{Corollary}
\newtheorem{lemma}{Lemma}
\newtheorem{rmk}{Remark}
\theoremstyle{definition}
\newtheorem{definition}{Definition}
\newtheorem{example}{Example}
\numberwithin{theorem}{section}
\numberwithin{coro}{section}
\numberwithin{lemma}{section}
\numberwithin{proposition}{section}
\numberwithin{rmk}{section}
\numberwithin{definition}{section}
\numberwithin{example}{section}
\numberwithin{que}{section}
\newcommand{\CC}{\mathbb{C}_\infty} % The complex numbers
\newcommand{\Fq}{\mathbb{F}_q} % Fq
\renewcommand{\AA}{A} % A
\newcommand{\ppi}{\widetilde{\pi}}
\newcommand{\GL}{\text{GL}_2(\AA)}
\newcommand{\HHH}{\Omega}
\renewcommand{\gg}{g}
\renewcommand{\slash}{{\mid_{[\gamma]_{k, m}}}}
\DeclareMathOperator{\ddet}{det}
\DeclareMathOperator{\DDd}{D}
\DeclareMathOperator{\Dunalt}{\mathscr{D}}
\newcommand{\jg}{j_\gamma(z)}
\newcommand{\Xg}{X_\gamma(z)}
\newcommand{\val}{\mathtt{val}}
\newcommand{\ord}{\mathtt{ord}}
\newcommand{\degg}{deg}
\begin{document}

%\begin{frontmatter}

\title{On hyperderivatives of single-cuspidal Drinfeld modular forms with $A$-expansions}

%    author one information
% \author[short version for running head]{name for top of paper}
%    author two information
\author{Aleksandar Petrov}
\address{Max Planck Institute for Mathematics \\
 Vivatsgasse 7 \\
  53111 Bonn, Germany}
%\email{alexvpetroff@gmail.com}
%\thanks{}

%\subjclass{}

\maketitle

\begin{abstract}
We show that the Drinfeld modular forms with $A$-expansions that have been constructed by the author are precisely the hyperderivatives of the subfamily of single-cuspidal Drinfeld modular forms with $A$-expansions that remain modular after hyperdifferentiation. In addition, we show that certain Drinfeld-Poincar\'{e} series display a similar behavior with respect to hyperdifferentiation, giving indirect evidence that the Drinfeld modular forms with $A$-expansions are Drinfeld-Poincar\'{e} series. The Drinfeld-Poincar\'{e} series that we consider generalize previous examples of such series by Gekeler, and Gerritzen and van der Put. 
\end{abstract}

%\begin{keyword}
%Drinfeld modular forms, hyperderivatives of Drinfeld modular forms, $A$-expansions, Drinfeld-Poincar\'{e} series
%\end{keyword}

%\end{frontmatter}

\section{Introduction and notation}

The purpose of this short note is to prove that the Drinfeld modular forms that have been constructed in \cite[Thm.1.3]{Pet} are precisely the hyperderivatives of the subfamily of single-cuspidal Drinfeld modular forms with $A$-expansions that remain modular after differentiation. The result is a consequence of results scattered in \cite{BoPe} and \cite{Pet}, but has not appeared in a single and concise form. 

In their paper \cite{BoPe} Bosser and Pellarin show that hyperderivatives preserve $A$-expansions, but that at the same time hyperderivatives do not preserve modularity in general. In \cite{Pet} the author constructs an infinite family of Drinfeld modular forms with $A$-expansions and notes that the subfamily of single-cuspidal Drinfeld modular forms with $A$-expansions is special. For example, the subfamily has an element in each possible weight in which a single-cuspidal form can appear. We connect the two results by checking (with the help of Lucas' Theorem) that every form with an $A$-expansion constructed by the author can be obtained from a single-cuspidal form with an $A$-expansion by using hyperdifferentiation.

We note that this gives an alternative proof of the main result in \cite{Pet}. Namely, one can first prove the existence of the special subfamily of single-cuspidal forms with $A$-expansions and then apply hyperderivatives to get the rest. However, proving the existence of the special family of single-cuspidal forms appears to be just as difficult as proving the existence of the whole family of forms with $A$-expansions. 

Finally, we show that when hyperdifferentiation preserves modularity, the Drinfeld-Poincar\'{e} series of type $1$ hyperdifferentiate to new Drinfeld-Poincar\'{e} series. By combining the results of Gekeler (\cite[Sec.~9]{Gek}) and Lopez \cite{Lop} one can show that one such series is the negative of the single-cuspidal form with $A$-expansion of smallest weight. The observed similarity under hyperdifferentiation suggests that Drinfeld modular forms with $A$-expansions are Drinfeld-Poincar\'{e} series.

\

\emph{Acknowledgement:} Part of the present work is contained in the author's Ph.D. thesis. The author wants to thank Dinesh Thakur, who suggested the original project, and David Goss, who persuaded the author that writing this short note is a useful endeavor. Finally, the author wants to thank the anonymous referee for the detailed report on the original version of this note, which helped to greatly improve the article.

\

We use the following standard notation. Let $p$ be a prime number, and let $q = p^l$ for a fixed positive integer $l$. Let $\Fq$ be the field with $q$ elements, $A = \Fq[T]$, $K = \Fq(T)$, $A_+$ the set of monic elements in $A$. We denote by $| \cdot |$ the absolute value on $K$ uniquely defined by $|a| = q^{\degg_T (a)}$, for $a \in A$. Let $K_\infty$ be the completion of $K$ w.r.t. $|\cdot|$, $\mathbb{C}_\infty$ be the completion of a fixed algebraic closure of $K_\infty$. Let $\Omega = \mathbb{C}_\infty \setminus K_\infty$ be the Drinfeld upper half-plane (see \cite[Prop.1.65]{Gos_eisenstein}) and $\Gamma = \GL$. Let $M_{k, m}$ be the finite-dimensional $\mathbb{C}_\infty$-vector space of Drinfeld modular forms for $\GL$ of weight~$k$ and type~$m$; let $S_{k, m}$ be the subspace of cuspidal Drinfeld modular forms inside $M_{k, m}$ (see \cite{Gek}, \cite{Goss_mod_forms} and \cite{Gos_eisenstein} for more details). We set
\[
	u(z) := \frac{1}{\ppi} \sum_{a \in A} \frac{1}{z+a},
\]
where $\ppi$ is a fixed choice of fundamental period of the Carlitz module. The function $u = u(z)$ is the usual uniformizer at 'infinity', which is denoted by~$t$ in \cite{Gek} and \cite{Pet}. One can show that for $a \in A_+$, $u_a := u(az)$ is a power series in $u$ with coefficients in $A$, which starts with $u^{|a|}$ (see \cite[(6.2)]{Gek}). If $n$ is a non-negative integer, then we let $G_n (X)$ be the $n$-th Goss polynomial for the lattice $\ppi A$, which corresponds to the Carlitz module (see \cite[(4.1)]{Gek}). The polynomial $G_n$ is monic of degree $n$ with coefficients in $K$ (see \cite[(3.4) \& (4.1)]{Gek}). In fact, the explicit description of the coefficients of the Carlitz exponential (\cite[(4.3)]{Gek}) together with formula \cite[(3.8)]{Gek} show that the non-zero coefficients of $G_n$ have absolute value $\leq 1$, which will be useful for us later. The reader should note that by definition of $G_n$ we have
\[
	G_n (u(z)) = \frac{1}{\ppi^n} \sum_{a \in A} \frac{1}{(z+a)^n}.
\]

%\begin{itemize}
%\item $p$ is a prime number, $q = p^l$ for a fixed positive integer $l$; 
%\item $\Fq$ is the finite field with $q$ elements, $A = \Fq[T]$, $K = \Fq(T)$; 
%\item $A_+$ is the set of monic elements of $A$;
%\item $| \cdot |$ is the absolute value on $K$ uniquely defined by $|a| = q^{\degg_T (a)}$, for $a \in A$; 
%\item $K_\infty$ is the completion of $K$ w.r.t. $|\cdot|$, $\mathbb{C}_\infty$ is the completion of a fixed algebraic closure of $K_\infty$; 
%\item $\Omega = \mathbb{C}_\infty \setminus K_\infty$ is the Drinfeld upper half-plane (see \cite[Prop.1.65]{Gos_eisenstein}); $\Gamma = \GL$;
%\item $M_{k, m}$  is the finite-dimensional $\mathbb{C}_\infty$-vector space of Drinfeld modular forms for $\GL$ of weight $k$ and type $m$; $S_{k, m}$ is the subspace of cuspidal Drinfeld modular forms inside $M_{k, m}$ (see \cite{Gek}, \cite{Goss_mod_forms} and \cite{Gos_eisenstein} for more details);
%\item $u = u(z)$ is the usual uniformizer at 'infinity', which is denoted by $t$ in \cite{Gek} and \cite{Pet};
%\item for $a \in A_+$, $u_a := u(az)$ is a power series in $u$ with coefficients in $A$, which starts with $u^{|a|}$ (see \cite[(6.2)]{Gek});
%\item for $n \in \mathbb{N}$, $G_n (X)$ is the $n$-th Goss polynomials for the lattice corresponding to the Carlitz module (see \cite[Sec.3]{Gek}).
%\end{itemize}

In addition, if 
\[
	\gamma = \begin{bmatrix} a & b \\ c & d \end{bmatrix} \in \Gamma,
\]
then we define
\[
	\jg := c z + d, \qquad \qquad \Xg := \frac{c}{c z + d}.
\]
%In this notation, any $f \in M_{k, m}$ satisfies the functional equation
%\[
%	f (\gamma z) = \det \gamma^{-m} J_\gamma^k \  f (z).
%\]

\section{Single-cuspidal modular forms with $A$-expansions}

In this section, we review the theory of single-cuspidal Drinfeld modular forms with $A$-expansions from \cite{Pet}.

Recall that for $z \in \Omega$ we have a non-archimedean analog of `imaginary distance': $|z|_i = \inf_{k \in K_\infty} | z - k|$. If $f \in M_{k, m}$, then for $|z|_i$ sufficiently large:
\[
	f(z)  = \sum_{i= 0}^\infty a_i u(z)^i \in \mathbb{C}_\infty [[u(z)]].
\]
The series on the right is called the expansion of $f$ at `infinity'. It was discovered by Goss  \cite[1.77]{Gos_eisenstein} and can be viewed as the analog of the Fourier expansion of a classical modular forms. Such an expansion determines $f$ uniquely because $\Omega$ is a connected rigid-analytic space. The main objects of interest in this paper are Drinfeld modular forms that have expansions of a different kind:

%\begin{definition} \label{f_k_m} Let $k$, $n$ be positive integers, we define the power series $f_{k, n}$ by the formula
%\[
%	f_{k, n} := \sum_{a \in A_+} a^{k-n} G_n (u_a) \in K [[u]].
%\]
%\end{definition}

\begin{definition} A modular form $f \in S_{k, m}$ is said to have an \emph{$A$-expansion of exponent~$n$} if there exist both a positive integer $n$ and coefficients $c_a \in \mathbb{C}_\infty$ such that
\[
	f = \sum_{a \in A_+} c_a G_n (u_a),
\]
where the equality is meant as an equality in $\mathbb{C}_\infty[[u]]$ between the expansion at `infinity' of $f$ on the left side and the expression on the right side.
\end{definition}

%\begin{rmk}
%In \cite{Pet} the author normalizes $G_n (X)$ so that its smallest non-zero coefficient is equal to $1$. While this is useful in computations, it also leads to many other renormalizations, which are not convenient when dealing with hyperderivatives. We have therefore decided to change notation from \cite{Pet} and to denote by $G_n(X)$ the standard Goss polynomial, which is normalized so that its largest non-zero coefficient is equal to $1$ (i.e., $G_n(X)$ is monic).
%\end{rmk}

\begin{rmk}
One can show that $m - n \equiv 0 \bmod (q-1)$, and that if we fix the exponent $n$, then the coefficients $\{ c_a \}_{a \in A_+}$ determine $f$ uniquely.  
\end{rmk}

We have the following result concerning the existence of $A$-expansions:

\begin{theorem} \label{special_family}
Let $s$ be a positive integer. Then
\[
	f_s : = \sum_{a \in A_+} a^{1 + s(q-1)} u_a
\]
is a single-cuspidal Drinfeld modular form of weight $2 + s (q-1)$ and type~$1$, i.e., $f_s \in S_{2 + s(q-1), 1}$ and single-cuspidal.  
\end{theorem}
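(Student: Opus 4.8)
The plan is to read off single-cuspidality together with the weight/type from the $u$-expansion, to reduce modularity to the generators of $\GL$, and then to concentrate all of the real work on the Weyl element. Since $G_1(X) = X$ (it is monic of degree $1$ and $G_1(u(z)) = \frac{1}{\ppi}\sum_{a}\frac{1}{z+a} = u(z)$), the defining series is literally a power series in $u$,
\[
	f_s = \sum_{a \in A_+} a^{k-1} u_a, \qquad k := 2 + s(q-1),
\]
and, unfolding the series for $u$, one also has $\ppi\, f_s(z) = \sum_{a \in A_+}\sum_{b \in A} \frac{a^{k-1}}{az+b}$. First I would record convergence and rigid-analyticity on all of $\Omega$: because $u_a \in A[[u]]$ starts with $u^{|a|}$, on a region where $|z|_i$ is large the general term has absolute value $q^{(k-1)\degr a}\,|u|^{q^{\degr a}}$, and the doubly-exponential decay of $|u|^{q^{\degr a}}$ dominates the polynomial growth $q^{(k-1)\degr a}$; the sum then converges locally uniformly on $\Omega$ (where $u$ is a unit), so $f_s$ is rigid-analytic, and the given series is its expansion at infinity.

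From that expansion single-cuspidality is immediate \emph{once modularity is known}: the term $a = 1$ contributes $u^1$ with coefficient $1$, while every $a$ with $\degr a \geq 1$ contributes only to $u^{|a|}$ with $|a| \geq q \geq 2$, so $\orde_\infty f_s = 1$; thus $f_s$ is cuspidal but not double-cuspidal, i.e. it lies in $\SForms \setminus \DoubleCusp$. To prove modularity I would reduce to the generators of $\GL$: the translations $\begin{bmatrix} 1 & b \\ 0 & 1\end{bmatrix}$ with $b \in A$, the diagonal matrices $\begin{bmatrix} \zeta & 0 \\ 0 & 1\end{bmatrix}$ with $\zeta \in \Fq^\times$, and the Weyl element $w = \begin{bmatrix} 0 & -1 \\ 1 & 0\end{bmatrix}$. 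Translations are immediate, since $u(z+b) = u(z)$ makes every $u_a$ periodic under $A$. For the diagonal matrices, $u(\zeta z) = \zeta^{-1} u(z)$ gives $f_s(\zeta z) = \zeta^{-1} f_s(z)$; as $\jg = 1$ and $\det = \zeta$ here, this forces the type to be $m = 1$ (the exponent $s(q-1)$ drops out because $\zeta^{q-1} = 1$), consistent with the scalar constraint $k \equiv 2m \pmod{q-1}$. After these two families the pair $(k, 1)$ is pinned down and single-cuspidality follows.

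The one remaining relation is the $w$-transformation
\[
	f_s(-1/z) = \sum_{a \in A_+} a^{k-1} u(-a/z) \;\overset{?}{=}\; z^k \sum_{a \in A_+} a^{k-1} u(az) = z^k f_s(z),
\]
and I expect this to be the main obstacle. It is \emph{not} a formal rearrangement of $\sum_{a,b} \frac{a^{k-1}}{az+b}$: the weight $a^{k-1}$ sits on a single coordinate, which destroys the lattice symmetry that trivializes modularity for ordinary Eisenstein series. That the asymmetry genuinely matters is visible from the excluded case $s = 0$, namely $E = \sum_a a\, u_a$ of weight $2$, which satisfies the translation and diagonal relations exactly as above yet is only \emph{quasimodular}, its $w$-defect being a quasi-period term proportional to $\Xg$. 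Hence any correct proof of the $w$-identity must use $s \geq 1$.

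I would attack this crux in one of two ways. (a) Realize $-f_s$ as a Drinfeld--Poincar\'{e} series of weight $k$ and type $1$, which is $\Ga$-invariant by construction, and prove the equality by matching $u$-expansions via Gekeler's computation of the expansions of such series; for the smallest weight $k = q+1$ this is exactly the Gekeler--Lopez identification quoted in the introduction. (b) Compute the defect $f_s(-1/z) - z^k f_s(z)$ directly and show it vanishes; writing the relevant inner sums through the values $u(-a/z)$ and $u(az)$ reduces matters to the vanishing of certain Carlitz power-sums $\sum_{a \in A_+,\, \degr a = d} a^{k-2}$ (equivalently to the combination of $k \equiv 2 \pmod{q-1}$ with $k - 2 = s(q-1) > 0$), which is precisely the input that $s \geq 1$ supplies. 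Establishing this vanishing — and thereby the $w$-transformation — is the heart of the matter; everything else is routine.
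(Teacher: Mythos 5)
Your scaffolding is fine as far as it goes --- reduction to the generators of $\Gamma$, the translation and diagonal checks, reading off $\orde_\infty f_s = 1$ from the $u$-expansion, and the instructive observation that $s=0$ yields only the quasimodular $E$ --- but the proposal stops exactly where the theorem lives. You say yourself that the Weyl-element identity ``is the heart of the matter,'' and neither of your two routes closes it. Note that the paper offers no direct argument either: its entire proof of this statement is a citation to \cite[Thm.~1.3]{Pet}, together with the explicit warning that the proof of this special case in \cite[Sec.~4]{Pet} is \emph{no simpler} than that of the general case --- so the step you defer as a ``crux'' is the whole theorem, and in \cite{Pet} it occupies a full section. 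Your route (a) is moreover circular in the context of this paper: the identity $f_{k,n} = -P_{k,n}$ is precisely what the final remark says remains open except for the single case $h = -P_{q+1,1}$, and computing the $u$-expansion of a Drinfeld--Poincar\'e series beyond that case is described there as highly non-trivial; invoking it assumes a statement strictly stronger than the one to be proved.

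Route (b) fails outright, because the vanishing you assert is false. Take $s = 1$, so $k - 2 = q-1$, and $d = 1$:
\[
	\sum_{\substack{a \in A_+ \\ \degr a = 1}} a^{k-2} \;=\; \sum_{c \in \Fq} (T+c)^{q-1} \;=\; \sum_{j=0}^{q-1} \binom{q-1}{j} T^{q-1-j} \sum_{c \in \Fq} c^j \;=\; -1 \;\neq\; 0,
\]
since $\sum_{c \in \Fq} c^j = -1$ when $(q-1) \mid j$ and $j > 0$, and $= 0$ otherwise (including $j = 0$, where the sum is $q \cdot 1 = 0$ in characteristic $p$). In general, by the classical results of Carlitz and Lee, the power sum $\sum_{a \in A_+,\, \degr a = d} a^{n}$ with $(q-1) \mid n$, $n > 0$, vanishes only when $d$ is large relative to the $q$-ary digit sum of $n$; the hypotheses $k \equiv 2 \bmod (q-1)$ and $k - 2 > 0$ hold for every $s \geq 1$ and supply nothing for small $d$. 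So the $w$-defect of $f_s$ cannot be killed by the power-sum identity you propose, and a correct proof must proceed by genuinely different means. (A minor secondary point: your claim of locally uniform convergence on all of $\Omega$ also needs more than is written --- the easy bound $|u(w)| \leq |\ppi|^{-1} |w|_i^{-1}$ is insufficient, and one should use the doubly exponential decay of $|u(az)|$ once $|az|_i = |a|\,|z|_i$ is large --- but that gap is repairable, whereas the Weyl-element step, by these means, is not.)
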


\begin{proof}
This is a special case of \cite[Thm.1.3]{Pet}. We note that the proof \cite[Sec.4]{Pet} of the special case is no simpler than the proof of the general case.
\end{proof}

%\begin{rmk}
%Theorem~1.3 in \cite{Pet} shows that $f_{k, n}$ equals the expansion of a Drinfeld modular form of weight $k$ and type $n$ if $k-2n$ is a positive multiple of $(q-1)$ and $n \leq p^{\val_p (k-n)}$. We will see below that one can prove this by starting with the special family $\{ f_s \}$ and using hyperderivatives.
%\end{rmk}

\begin{rmk}
We have changed the indexing from \cite{Pet} and we now start with $s = 1$, rather than with $s = 0$ as in \cite[Def.3.1]{Pet}.
\end{rmk}

\begin{rmk} \label{eigen_prop}
The existence of an $A$-expansion for $f_s$ of exponent $1$ easily implies  (see \cite[Thm.2.3]{Pet}) that $f_s$ is a simultaneous Hecke eigenform with eigensystem $\{ \wp \}$. 
\end{rmk}

Arguably the most important member of the family $\{ f_s \}$ is the form $f_1 = h$, whose $A$-expansion was discovered by Lopez in \cite{Lop}:
\[
	h = \sum_{a \in A_+} a^q u_a.
\]

\section{Hyperderivatives of the family $\{ f_s \}$}

In this section, we recall the theory of hyperderivatives\footnote{Some authors also call these Hasse derivative.} applied to Drinfeld modular forms (see \cite[Sec.3]{BoPe}). %Hyperdifferentiation takes $\AA$-expansions to $\AA$-expansions, and therefore it can be used to generate $\AA$-expansions from the special family $\{f_s \}$. Unfortunately, the hyperderivative of a Drinfeld modular form is quasi-modular (see \cite[Sec.2]{BoPe}) and not necessarily modular. Nevertheless, we show that any Drinfeld modular form with an $A$-expansion that has been constructed in \cite[Thm.1.3]{Pet} comes from the hyperdifferentiation of a member of the family $\{ f_s \}$.

\begin{definition}
 Let $f: \HHH \to \CC$ be a rigid analytic function. Define its $n^\text{th}$ \emph{unaltered hyperderivative}, $\Dunalt_n f(z)$, by the formula
\[
 f (z + \varepsilon) = \sum_{ n \geq 0} \Dunalt_n f (z) \varepsilon^n,
\]
where $\varepsilon$ is small in absolute value.  The $n^\text{th}$ \emph{hyperderivative}, $\DDd_n f$, is defined by the formula
\[
  \DDd_n = \frac{(-1)^n}{\ppi^n} \Dunalt_n.
\]
\end{definition}

One can show (see \cite[Sec.3]{BoPe} and the references therein) that the $\DDd_n$'s (and the $\Dunalt_n$'s) form an iterative family of higher differentials.  In particular, we have a Leibniz rule:
\begin{equation} \label{leibniz}
\Dunalt_n (f g) = \sum_{ r = 0}^n \Dunalt_r (f) \Dunalt_{n-r} (g).
\end{equation}

It is an easy exercise to show that
\[
 \DDd_n \left ( \frac{1}{(\ppi a z + \ppi b)^w} \right ) = \binom{w + n -1}{n}  \frac{a^n}{(\ppi a z + \ppi b)^{n + w}}
\]
and therefore the hyperderivatives $\{ \DDd_n \}$ preserve $A$-expansions:
\[
	\DDd_n \left (\sum_{a \in \AA_+} c_a G_w (u_a) \right) = \binom{w + n -1}{n} \sum_{a \in \AA_+} c_a a^n G_{n + w} (u_a).
\]
In particular for $w = 1$, we see that
\begin{equation} \label{hasse_action}
 \DDd_n \left ( \sum_{a \in \AA_+} c_a u_a \right ) =  \sum_{a \in \AA_+} c_a a^n G_{n+1} (u_a).
  \end{equation}
  
Next we consider whether hyperderivatives preserve modularity. Bosser and Pellarin have shown  \cite[Lem.3.4]{BoPe} the following result concerning the action of $\Dunalt_n$ on Drinfeld modular forms:

\begin{theorem} \label{Bose}
 If $f \in M_{k, m}$, then for $\gamma \in \Gamma$ we have
\[
  \begin{aligned}
   & \Dunalt_n f (\gamma z)  =  (\ddet \gamma)^{-m-n} \jg^{k + 2n} \Dunalt_n f(z)  \\
  &  \qquad \qquad \qquad + (\ddet \gamma)^{-m-n} \jg^{k + 2n} \sum_{j = 1}^n \binom{k + n -1}{j} \Xg^j \Dunalt_{n-j} f(z). 
  \end{aligned}
\]
\end{theorem}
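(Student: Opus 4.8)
The plan is to expand $f(\gamma(z+\varepsilon))$ as a power series in a small parameter $\varepsilon$ in two different ways and then match coefficients. First I would use the definition of the hyperderivatives at the point $\gamma z$: since the M\"obius action satisfies
\[
 \gamma(z+\varepsilon) - \gamma z = \frac{(\ddet\gamma)\,\varepsilon}{j_\gamma(z)\bigl(j_\gamma(z)+c\varepsilon\bigr)} = \frac{\ddet\gamma}{j_\gamma(z)^2}\cdot\frac{\varepsilon}{1+X_\gamma(z)\varepsilon},
\]
which tends to $0$ with $\varepsilon$, I can write $\delta := \gamma(z+\varepsilon)-\gamma z$ and substitute this increment into $f(\gamma z + \eta) = \sum_{s\ge0}\Dunalt_s f(\gamma z)\,\eta^s$ to obtain $f(\gamma(z+\varepsilon)) = \sum_{s\ge0}\Dunalt_s f(\gamma z)\,\delta^s$. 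Independently, the modular transformation law $f(\gamma(z+\varepsilon)) = (\ddet\gamma)^{-m} j_\gamma(z+\varepsilon)^k f(z+\varepsilon)$, combined with $j_\gamma(z+\varepsilon) = j_\gamma(z)\bigl(1+X_\gamma(z)\varepsilon\bigr)$ and the defining expansion $f(z+\varepsilon) = \sum_{r\ge0}\Dunalt_r f(z)\,\varepsilon^r$, gives a second power series for the same quantity.

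The crucial step is to clear all the denominators at once by the change of variable $\widetilde\varepsilon := \varepsilon/(1+X_\gamma(z)\varepsilon)$, whose inverse is $\varepsilon = \widetilde\varepsilon/(1-X_\gamma(z)\widetilde\varepsilon)$ and which satisfies $1+X_\gamma(z)\varepsilon = (1-X_\gamma(z)\widetilde\varepsilon)^{-1}$. Under this substitution $\delta = (\ddet\gamma)\,j_\gamma(z)^{-2}\,\widetilde\varepsilon$, so the first expansion becomes a genuine power series in $\widetilde\varepsilon$ whose coefficient of $\widetilde\varepsilon^{\,n}$ is $\Dunalt_n f(\gamma z)\,(\ddet\gamma)^n j_\gamma(z)^{-2n}$. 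The second expansion turns into $(\ddet\gamma)^{-m} j_\gamma(z)^k \sum_{r\ge0}\Dunalt_r f(z)\,\widetilde\varepsilon^{\,r}\,(1-X_\gamma(z)\widetilde\varepsilon)^{-k-r}$, and expanding each factor by the generalized binomial theorem I would read off the contribution $\binom{k+n-1}{\,n-r\,}X_\gamma(z)^{\,n-r}$ to the coefficient of $\widetilde\varepsilon^{\,n}$. Equating the two coefficients of $\widetilde\varepsilon^{\,n}$, solving for $\Dunalt_n f(\gamma z)$, and reindexing with $j = n-r$ should produce exactly the claimed identity, with $j=0$ giving the first summand and $1\le j\le n$ giving the displayed sum.

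I expect the combinatorics to be routine, so the points needing genuine care are analytic and arithmetic. Every line above must be read as an identity of convergent power series over $\CC$ valid for $\varepsilon$ of small absolute value; this is precisely where the hypothesis that $\varepsilon$ is small enters, ensuring that $1+X_\gamma(z)\varepsilon$ and $1-X_\gamma(z)\widetilde\varepsilon$ are invertible and that substituting the increment $\delta$ into the expansion of $f$ at $\gamma z$ is legitimate (the iterative, higher-differential nature of the family $\{\Dunalt_n\}$ is what guarantees that substituting one hyperderivative expansion into another is consistent). On the arithmetic side, because $k$ and $r$ are non-negative integers the exponent $-k-r$ is a negative integer, so the $\binom{k+n-1}{n-r}$ are honest integers reduced modulo $p$ and no division or degeneration occurs in characteristic $p$. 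The main thing to pin down carefully is therefore the exact sign and indexing in the change of variable $\widetilde\varepsilon = \varepsilon/(1+X_\gamma(z)\varepsilon)$, since it is this normalization that converts the two ragged expansions into matching power series in $\widetilde\varepsilon$.
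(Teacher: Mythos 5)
Your proposal is correct, but there is nothing internal to compare it with: the paper does not prove Theorem \ref{Bose} at all, quoting it verbatim from Bosser and Pellarin \cite{BoPe}, so your argument supplies a genuinely self-contained proof where the paper has only a citation. The computation checks out at every step: the increment formula $\gamma(z+\varepsilon)-\gamma z = (\ddet\gamma)\,\jg^{-2}\,\widetilde\varepsilon$ with $\widetilde\varepsilon = \varepsilon/(1+\Xg\varepsilon)$ is right; after the substitution the second expansion becomes $(\ddet\gamma)^{-m}\jg^{k}\sum_{r\ge 0}\Dunalt_r f(z)\,\widetilde\varepsilon^{\,r}(1-\Xg\widetilde\varepsilon)^{-k-r}$, and the decisive (and pleasantly rigid) point is the collapse $\binom{(k+r)+(n-r)-1}{n-r}=\binom{k+n-1}{n-r}$, whose upper index is independent of $r$, so that reindexing by $j=n-r$ yields exactly the displayed transformation law, with $j=0$ giving the leading term; a quick sanity check at $n=1$ reproduces the classical weight-$k$ derivative computation. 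Your attention to the two delicate points is also well placed: the binomial series $(1-x)^{-N}=\sum_j\binom{N+j-1}{j}x^j$ has integer coefficients and so is unproblematic over $\CC$ in characteristic $p$, and the coefficient comparison is licensed by the non-archimedean identity theorem together with the fact that $\varepsilon\mapsto\widetilde\varepsilon$ is an invertible analytic change of coordinate on a small disk (one could spell out that $|1+\Xg\varepsilon|=1$ for $|\Xg\varepsilon|<1$, so $|\widetilde\varepsilon|=|\varepsilon|$ and smallness is preserved, but you clearly have this in hand). As a point of comparison with the cited source: the only piece of the Bosser--Pellarin machinery reproduced later in this paper is their formula \eqref{main_eq}, which expands $\Dunalt_n(f\circ\gamma)$ in the opposite direction, in terms of $(\Dunalt_i f)(\gamma z)$; deriving Theorem \ref{Bose} from that formula plus the Leibniz rule requires solving a triangular system, whereas your single substitution $\widetilde\varepsilon=\varepsilon/(1+\Xg\varepsilon)$ is the generating-series mechanism underlying both identities at once and gets the theorem in one pass. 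In short: correct, complete in outline, and arguably more direct than the route the paper points to.
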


\begin{rmk}  \label{rmk_imp} In particular, Theorem~\ref{Bose} shows that if
\[
  \binom{k + n -1}{j} \equiv 0 \bmod p, \qquad \forall j, \ 1 \leq j \leq n,
\]
then $\Dunalt_n$ (and $\DDd_n$) preserves modularity, i.e., $\Dunalt_n, \DDd_n: M_{k, m} \to M_{k + 2n, m + n}$.
\end{rmk}

We want to use the hyperderivatives $\DDd_n$ and the special family $\{ f_s \}$ to produce other forms with $\AA$-expansions as in the following definition.

\begin{definition} \label{f_k_m} Let $k$, $n$ be positive integers, we define the power series $f_{k, n}$ by the formula
\[
	f_{k, n} := \sum_{a \in A_+} a^{k-n} G_n (u_a) \in K [[u]].
\]
\end{definition}

The reader should note that in this notation $f_s = f_{2 + s(q-1), 1}$.

We have the following result:

\begin{theorem} \label{main}
Assume that $k, n$ are positive integers such that $k - 2 n$ is a positive multiple of $(q-1)$ and $n \leq p^{\val_p (k - n)}$. If \[ s = \frac{k - 2 n}{q-1} , \] then
\[
	\DDd_{n - 1} f_s = f_{k, n}
\]	
is an element of $S_{k, n}$.
\end{theorem}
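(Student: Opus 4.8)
The plan is to separate the statement into a formal power-series identity and a modularity assertion, treating them in that order. The identity $\DDd_{n-1} f_s = f_{k,n}$ should first be established as an equality of power series in $u$. Since $G_1(X) = X$, the form $f_s = \sum_{a \in A_+} a^{1+s(q-1)} u_a$ of Theorem~\ref{special_family} is an $A$-expansion of exponent $1$, and applying the hyperderivative action \eqref{hasse_action} with index $n-1$ gives
\[
\DDd_{n-1} f_s = \sum_{a \in A_+} a^{1+s(q-1)} a^{n-1} G_n(u_a) = \sum_{a \in A_+} a^{s(q-1)+n} G_n(u_a).
\]
Because $s = (k-2n)/(q-1)$, the exponent simplifies to $s(q-1)+n = k-n$, and the right-hand side is exactly $f_{k,n}$ of Definition~\ref{f_k_m}. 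This part is routine and relies only on the fact that hyperderivatives preserve $A$-expansions.

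The substantive part is to show $f_{k,n} \in S_{k,n}$. By Theorem~\ref{special_family} and the choice of $s$, one has $f_s \in S_{2+s(q-1),1} = S_{k-2(n-1),1}$, so $\DDd_{n-1}$ would carry it into $M_{k,n}$ precisely by Remark~\ref{rmk_imp}, provided the congruences
\[
\binom{k-n}{j} \equiv 0 \bmod p, \qquad 1 \leq j \leq n-1,
\]
all hold; here I used $\bigl(k-2(n-1)\bigr) + (n-1) - 1 = k-n$, and the output weight and type come out as $k-2(n-1) + 2(n-1) = k$ and $1 + (n-1) = n$. Thus the entire theorem reduces to checking these $n-1$ congruences from the arithmetic hypothesis $n \leq p^{\val_p(k-n)}$.

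The key step, which I expect to be the main obstacle, is this congruence check, and it is here that Lucas' Theorem enters. Set $v := \val_p(k-n)$; the hypothesis $n \leq p^v$ guarantees that every $j$ with $1 \leq j \leq n-1$ satisfies $j < p^v$, so its base-$p$ digits are supported on positions $0, \ldots, v-1$. Since $p^v \mid (k-n)$, the base-$p$ digits of $k-n$ in positions $0, \ldots, v-1$ all vanish. Writing $d_i$ and $e_i$ for the base-$p$ digits of $k-n$ and $j$, Lucas' Theorem gives $\binom{k-n}{j} \equiv \prod_i \binom{d_i}{e_i} \bmod p$; for each $j \geq 1$ some position $i < v$ has $e_i \geq 1 > 0 = d_i$, so $\binom{d_i}{e_i} = 0$ and the whole product vanishes. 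This yields all the required congruences, hence $f_{k,n} \in M_{k,n}$. Finally, since $n \geq 1$ the Goss polynomial $G_n$ has zero constant term, so each summand $G_n(u_a)$ starts in positive order in $u$; thus $f_{k,n}$ has vanishing constant $u$-coefficient, vanishes at the unique cusp, and therefore lies in $S_{k,n}$, completing the argument.
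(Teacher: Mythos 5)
Your proposal is correct and follows essentially the same route as the paper's own proof: compute $\DDd_{n-1} f_s$ via the $A$-expansion formula \eqref{hasse_action}, reduce modularity to the congruences $\binom{k-n}{j} \equiv 0 \bmod p$ for $1 \leq j \leq n-1$ (the paper's \eqref{eq_in_thm}), and verify these by Lucas' Theorem using $n \leq p^{\val_p(k-n)}$. You merely spell out the base-$p$ digit argument and the cuspidality of $f_{k,n}$ more explicitly than the paper does, both correctly.
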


\begin{proof}
One immediately checks that \eqref{hasse_action} gives
\[
	\DDd_{n - 1} \left ( \sum_{a \in A_+} a^{1 + s (q-1)} u_a  \right) = \sum_{a \in A_+} a^{k-n} G_n (u_a).
\]
It remains to show that this is modular, which can be accomplished by showing that $\forall j, 1 \leq j \leq n - 1$, we have
\begin{equation} \label{eq_in_thm}
	\binom{2 + s(q-1) + (n - 1) -1}{j} = \binom{k - n}{j} \equiv 0 \bmod p.
\end{equation}
Recall that Lucas' Theorem says that if
\[
\begin{aligned}
	\sigma &  = \sigma_0 + \sigma_1 p + \cdots + \sigma_r p^r + \sigma^\ast p^{r+1},  &  \qquad 0 \leq \sigma_i < p, \\
	j &  = j_0 + j_1 p + \cdots + j_r p^r, & 0 \leq j_i < p, \\
\end{aligned}
\]
then
\[
	\binom{\sum_i \sigma_i p^i}{\sum_i j_i p^i} \equiv \binom{\sigma_0}{j_0} \binom{\sigma_1}{j_1} \cdots \binom{\sigma_r}{j_r} \bmod p.
\]
Combining Lucas' Theorem with $n \leq p^{\val_p (k - n)}$ shows that \eqref{eq_in_thm} holds for $1 \leq j \leq n - 1$, completing the proof.
\end{proof}

\begin{rmk}
As we have already remarked \cite[Thm.1.3]{Pet} proves that $f_{k, n}$ are elements of $S_{k, n}$ when $k, n$ are positive integers such that $k - 2 n$ is a positive multiple of $(q-1)$ and $n \leq p^{\val_p (k - n)}$. Theorem~\ref{main} shows that every $f_{k, n}$ is a hyperderivative of some form $f_s$ which is modular. Therefore the forms that occur in \cite[Thm.1.3]{Pet} are precisely the hyperderivatives of the family $\{ f_s \}$ that remain modular.
\end{rmk}

\begin{rmk}
The main property of the $A$-expansions $f_{k, n}$ that we were interested in \cite{Pet} is that they are Hecke eigenforms with eigensystems $\{ \wp^n \}$. Lemma~4.6 from \cite{BP} shows that we recover this from the result quoted in Remark~\ref{eigen_prop}. 
\end{rmk}

Theorem \ref{Bose} shows that in general the hyperderivatives of Drinfeld modular forms are Drinfeld quasi-modular forms (see \cite[Def.2.1]{BoPe} for the definition). Therefore without any conditions on $n$, we have that $\DDd_n f_s$ is a Drinfeld quasi-modular form with an $A$-expansion, which in addition is a Hecke eigenform. This provides new examples of quasi-modular forms with non-zero expansions at `infinity'. At present the author does not know how this contributes to the theory of quasi-modular forms of Bosser and Pellarin in other ways.

Bosser and Pellarin also consider a different kind of higher derivative, namely Serre derivatives, of Drinfeld modular forms in \cite[Sec.4.1]{BP}. Serre derivatives have the advantage of preserving modularity, but they do not preserve $A$-expansions and Hecke properties in general.

We end this section with an example.

\begin{example}
Let $q = 3$. The following is a complete list of $\DDd_n h$ that are modular for $n \leq 1000$:
\[
\begin{aligned}
%\DDd_1 h  & = \sum_{a \in A_+} a^{4} u_a^2 \in S_{6, 0}, \\
\DDd_6 h & = \sum_{a \in A_+} a^{9} G_7 (u_a) \in S_{16, 1}, \\
\DDd_{24} h & = \sum_{a \in A_+} a^{27} G_{25} (u_a) \in S_{52, 1}, \\
\DDd_{78} h & = \sum_{a \in A_+} a^{81} G_{79} (u_a) \in S_{160, 1}, \\
\DDd_{240} h & = \sum_{a \in A_+} a^{243} G_{241} (u_a) \in S_{484, 1}, \\
\DDd_{726} h & = \sum_{a \in A_+} a^{729} G_{727} (u_a) \in S_{1456, 1}.
\end{aligned}
\]
\end{example}

\section{Hyperderivatives of Drinfeld-Poincar\'{e} series}

In this section, we define Drinfeld-Poincar\'{e} series and we show how some of them behave under hyperdifferentiation.

If $f : \Omega \to \CC$ is a rigid analytic function, then define the \emph{slash operator of weight $k$ and type $m$}, $\slash$,  to be
\[
	\left ( f \slash \right ) (z) = \frac{(\det \gamma)^m}{(c z + d)^k} f(\gamma z) = \frac{(\det \gamma)^m}{\jg^k} f(\gamma z), \qquad \forall \gamma \in \Gamma.
\] 

Let $H$ be the subgroup 
\[
	\left \{ \begin{bmatrix} \ast & \ast \\ 0 & 1 \end{bmatrix} \right \} \subset \Gamma.
\]
We have $H \backslash \Gamma \cong \{ (c, d) : c, d \in A, (c, d) = 1 \}$ (see \cite[(5.11)]{Gek}) under the map
\[
	\gamma_{c, d} : =  \begin{bmatrix} \ast & \ast \\ c & d \end{bmatrix}  \mapsto (c, d).
\]
We will use this identification below.

\begin{definition} Let $k, n$ be two non-negative integers and let $m$ be the reduction of $n$ modulo $(q-1)$. Define the \emph{Drinfeld-Poincar\'{e} series of weight $k$ and type $m$}, $P_{k, n}$,  to be
\begin{equation} \label{Poincare_series}
P_{k, n}  = \sum_{\gamma \in H \backslash \Gamma} G_n (u) \slash = \sum_{\gamma \in H \backslash \Gamma} \frac{(\det \gamma)^m}{(cz + d)^k} G_n (u(\gamma z)).
\end{equation}
\end{definition}

The previous definition is similar to the definition of Poincar\'{e} series by Gekeler in \cite[(5.11)]{Gek} and to the definition of Eisenstein series for $\text{PSL}_2 (A)$ by Gerritzen and van der Put in \cite[p.304]{GvP}. In particular, our definition coincides with Gekeler's definition if $G_n (X) = X^n$ (eg. $n \leq q$).

\begin{theorem}
For $k, n, m$ as in the definition the function $P_{k, n}$ is a Drinfeld modular form of weight $k$ and type $m$. Further, $P_{k, n}$ is non-zero if $k \equiv 2 n \bmod (q-1)$ and $n \leq \frac{k}{q+1}$.
\end{theorem}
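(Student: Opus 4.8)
The plan is to establish the two assertions separately. For modularity, I would verify directly that $P_{k, n}$ is invariant under the slash operator and holomorphic (including at the cusp). The slash-invariance follows from the standard Poincar\'{e}-series averaging argument: for any $\delta \in \Gamma$, the substitution $\gamma \mapsto \gamma \delta$ permutes the cosets $H \backslash \Gamma$, and one checks via the cocycle relations $\jg[\gamma\delta](z) = \jg(\delta z)\, j_\delta(z)$ and $\det(\gamma\delta) = \det\gamma \det\delta$ that $\bigl(P_{k,n} \slash[\delta]\bigr)(z) = P_{k,n}(z)$; that $H$ acts trivially on the summand $G_n(u)$ is exactly what makes the sum over cosets well-defined, since $u(z)$ is invariant under the translations and scalings in $H$ up to the appropriate factor built into $G_n$. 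The more delicate analytic point is convergence and holomorphy at infinity: I would bound each term using that the non-zero coefficients of $G_n$ have absolute value $\leq 1$ (stated in the introduction) together with the growth of $u(\gamma z)$, and invoke the rigid-analytic convergence of such series in the region where $|z|_i$ is large, matching Gekeler's treatment in \cite[(5.11)]{Gek}.

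For the non-vanishing claim, the key is to read off the leading term of the $u$-expansion of $P_{k,n}$ at infinity. I would isolate the contribution of the cosets with $c = 0$, which forces $d \in \Fq^\ast$ (by the coprimality condition $(c,d)=1$ together with $d$ being determined up to the $H$-action), and compute that these ``degenerate'' terms contribute a nonzero constant multiple of $G_n(u) = u^n + (\text{higher order})$ to the expansion; meanwhile every term with $c \neq 0$ contributes to strictly higher order in $u$ because $u(\gamma z) \to 0$ faster as $|z|_i \to \infty$. The condition $k \equiv 2n \bmod (q-1)$ is needed to ensure the type $m$ is consistent (so that the $c=0$ terms do not cancel by a character sum over $\Fq^\ast$), and the condition $n \leq \frac{k}{q+1}$ guarantees that the contributions from the $c \neq 0$ cosets really are of higher $u$-order than $u^n$, so that no cancellation can kill the leading $u^n$ coefficient.

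The main obstacle I anticipate is the second step: controlling the $c \neq 0$ terms precisely enough to prove that the coefficient of $u^n$ is nonzero. One must show two things simultaneously — that the $c=0$ contribution to the $u^n$-coefficient is a nonzero element of $\Fq$ (not vanishing modulo $p$), and that no term with $c \neq 0$ interferes with that coefficient. The inequality $n \leq \frac{k}{q+1}$ is what rigidly separates these two regimes, and I would expect to pin it down by a careful valuation estimate: for $\gamma = \gamma_{c,d}$ with $c \neq 0$, one has $|u(\gamma z)| \sim |z|_i^{-q \deg c}$ while the factor $\jg^{-k}$ contributes growth of order $|z|_i^{-k}$ in the relevant normalization, and balancing these against $u^n \sim |z|_i^{-n}$ yields precisely the stated bound. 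I would model the bookkeeping on Gekeler's non-vanishing arguments for his Poincar\'{e} series in \cite[Sec.~5]{Gek}, adapting them to the presence of the Goss polynomial $G_n$ in place of the monomial $u^n$.
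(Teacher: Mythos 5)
Your modularity half is essentially the paper's: the paper simply quotes Gerritzen--van der Put for the modularity of $\sum_{\gamma \in H \backslash \Gamma} (cz+d)^{-k}\, u^w(\gamma z)$ and observes that $G_n(u)$ is a polynomial in $u$, whereas you re-run the averaging and convergence argument directly; that part is fine. The non-vanishing half is where your proposal has a genuine gap. Your key claim --- that each term with $c \neq 0$ contributes only to strictly higher order in $u$ because ``$u(\gamma z) \to 0$ faster as $|z|_i \to \infty$'', quantified as $|u(\gamma z)| \sim |z|_i^{-q \deg c}$ --- is false. That estimate describes $u_a(z) = u(az)$, not $u(\gamma z)$. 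For $\gamma = \gamma_{c,d}$ with $c \neq 0$ one has $\gamma z - a/c = -\det\gamma/(c(cz+d))$, so as $|z|_i \to \infty$ the point $\gamma z$ tends to the cusp $a/c \in K$ with $|\gamma z|_i \to 0$, and $|u(\gamma z)|$ does \emph{not} tend to $0$: it stays bounded away from $0$ when $a/c \notin A$, and it grows like $|c(cz+d)/\ppi|$ when $a/c \in A$ (e.g.\ for $c \in \Fq^\times$), since $\gamma z$ then approaches a pole of $u$. All decay of an individual term comes from the factor $\jg^{-k}$. Moreover an individual term is not invariant under $z \mapsto z + a$, so it has no $u$-expansion of its own; to speak of the $u^j$-coefficients of the $c \neq 0$ part you must first sum over translation classes of cosets, and computing (or even bounding below the $u$-order of) that sum is the non-archimedean analogue of the Petersson--Kloosterman expansion --- exactly the ``highly non-trivial task'' of finding the expansion at infinity of $P_{k,n}$ that the paper's closing remark flags. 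Indeed, if your step went through as described, you would essentially have shown that $P_{k,n}$ agrees with $-G_n(u)$ to high order, which together with the known $A$-expansions would yield identities of the shape $P_{k,n} = -f_{k,n}$ that the paper explicitly states are open except for $h = -P_{q+1,1}$. (A smaller slip in the same step: $G_n(u) = u^n + (\text{higher order})$ is backwards --- the remaining monomials $\gg_i u^{n - i(q-1)}$ of the Goss polynomial have \emph{lower} $u$-order, so the $c = 0$ contribution leads with $u^{n - s(q-1)}$, where $n - s(q-1) = \ord_X G_n$, not with $u^n$.)

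The paper avoids the expansion at infinity altogether and instead follows Gerritzen--van der Put: it evaluates $P_{k,n}$ at the explicit point $\xi_N = \sqrt[N]{T}$ with $N > nq(q-1)$ --- a point with small imaginary part near the boundary of the fundamental domain, not in the region $|z|_i \gg 0$ --- and splits the series into three pieces: $S_1$ (cosets with $c = 0$, $d \in \Fq^\times$), $S_2$ (cosets with $c \in \Fq^\times$ and upper-left entry $0$), and $S_3$ (all remaining cosets). Your character-sum intuition is vindicated there: the congruence $k \equiv 2n \bmod (q-1)$ makes $\sum_{d \in \Fq^\times} d^{2n-k} = -1$, so $S_1 = -G_n(u(\xi_N))$ rather than $0$. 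The choice $N > nq(q-1)$ forces the monomials $\gg_i u(\xi_N)^{n-i(q-1)}$ to have pairwise distinct absolute values, so $|S_1| = \max_i |\gg_i|\,|u(\xi_N)|^{n-i(q-1)}$ exactly; and the hypothesis $n \leq \frac{k}{q+1}$ enters as the elementary exponent inequality $q(n - i(q-1)) \leq k - n + i(q-1)$ for all $0 \leq i \leq s$, which via ultrametric estimates gives $|S_1| > |S_2|$ and $|S_1| > |S_3|$, whence $P_{k,n}(\xi_N) = S_1 + S_2 + S_3 \neq 0$. If you want to salvage your outline, replace the leading-$u$-coefficient step by this evaluation at $\xi_N$: it requires no control whatsoever of the $u$-expansion of the $c \neq 0$ part.
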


\begin{proof}
Gerritzen and van der Put prove (\cite[p.304-305]{GvP}) that series of the form
\[
	 \sum_{\gamma \in H \backslash \Gamma} \frac{1}{(cz + d)^k} u^w(\gamma z), \qquad w > 0,
\]
are Drinfeld modular forms  of weight $k$ and type $w$. Since $G_n(u)$ is a polynomial in $u$, it is then immediate that $P_{k, n}$ is a Drinfeld modular form of weight $k$ and type $m$.

For the second statement we again take the approach of Gerritzen and van der Put \cite[p.305-307]{GvP}. We fix a positive integer $n$ such that $n \leq \frac{k}{q+1}$. Write
\[
	G_n (X) = X^n + \gg_1 X^{n - (q-1)} + \cdots + \gg_s X^{n - s (q-1)}, 
\]
with $\ord_X G_n = n - s(q-1)$. Recall that $|\gg_i| \leq 1$ and $\gg_i \in K$. For convenience we also let $\gg_0 = 1$.

Let $\xi_N = \sqrt[N]{T}$. We will show that if $N > n q (q-1)$, then $P_{k, n} (\xi_N) \neq 0$. 

We split the series defining $P_{k, n}$ into three parts. 

First, consider the part corresponding to matrices $\gamma_{c, d} \in H \backslash \Gamma$ with $\gamma_{c, d} =  \begin{bmatrix} d^{-1}& 0 \\ 0 & d \end{bmatrix}$, $d \in \Fq^\times$:
\[
	S_1 = \sum_{c = 0, d \in \Fq^\times} \frac{1}{d^{k}} G_n \left (u \left ( \frac{d^{-1} \xi_N}{d} \right) \right).
\]
As shown in \cite[p.305]{GvP} we get $S_1 = -G_n (u (\xi_N))$. By using the argument in \cite[Lem.5.5]{Gek} we have \[ |u(\xi_N)| = |\ppi^{-1}| |\xi_N|^q = q^{\frac{-q}{q-1}}  q^{\frac{q}{N}}. \] Combining this with $N > n q (q-1)$ and $|\gg_i| \leq 1$, one easily checks that for $0 \leq i, j \leq s$, $i \neq j$ we have
\[
	|\gg_i u (\xi_N)^{n - i (q-1)} | \neq |\gg_j u (\xi_N)^{n - j(q-1)}|.
\]
Hence 
\begin{equation} \label{s1}
	|S_1| = \max_{0 \leq i \leq s} |\gg_i| |u (\xi_N)|^{n - i (q-1)} = \max_{0 \leq i \leq s} |\gg_i| \frac{1}{|\ppi|^{n - i (q-1)}} \frac{1}{|\xi_N|^{q (n - i (q-1))}}.
\end{equation}
Second, consider the part corresponding to matrices $\gamma_{c, d} \in H \backslash \Gamma$ with $\gamma_{c, d} =  \begin{bmatrix} 0& c^{-1} \\ c & d \end{bmatrix}$, $c \in \Fq^\times$, $d \in \Fq$:
\[
\begin{aligned}
	S_2 & = \sum_{c \in \Fq^\times, d \in \Fq} \frac{(-1)^m}{(c\xi_N + d)^k} G_n \left ( u \left ( \frac{c^{-1}}{c  \xi_N+ d} \right ) \right) \\
	& =  \sum_{c \in \Fq^\times, d \in \Fq} \frac{(-1)^m}{(c\xi_N + d)^k}  \sum_{i = 0}^s \gg_i u \left ( \frac{c^{-1}}{c  \xi_N+ d} \right )^{n - i(q-1)}.
\end{aligned}
\]
Summing on the $c$'s we get
\[
	S_2 = (-1)^{m + 1} \sum_{d \in \Fq} \frac{1}{(\xi_N + d)^k}  \sum_{i = 0}^s \gg_i u \left ( \frac{1}{  \xi_N+ d} \right )^{n - i(q-1)}.
\]
Consider each $i$. As argued on p.306 of \cite{GvP} one has 
\[
	\sum_{d \in \Fq} \frac{\gg_i}{(\xi_N + d)^k}  u \left ( \frac{1}{  \xi_N+ d} \right )^{n - i(q-1)}  = \delta_i + \sum_{d \in \Fq} \frac{\gg_i}{(\xi_N + d)^k}  \left ( \frac{\xi_N + d}{\ppi} \right  )^{n - i(q-1)} ,
\]
with $|\delta_i| <  |\gg_i| |\ppi^{-1}|^{n - i (q-1)} |\xi_N|^{k - n + i (q-1)}$. Since $|d| < |\xi_N|$ we also get
\[
	\sum_{d \in \Fq} \frac{\gg_i}{(\xi_N + d)^k}  u \left ( \frac{1}{  \xi_N+ d} \right )^{n - i(q-1)}  = \delta_i' + \sum_{d \in \Fq} \frac{\gg_i}{(\xi_N)^k}  \left ( \frac{\xi_N}{\ppi} \right  )^{n - i(q-1)},
\]
with $|\delta_i'| <  |\gg_i| |\ppi^{-1}|^{n - i (q-1)} |\xi_N|^{k - n + i (q-1)}$. Summing on the $d$'s gives $0$, so we have
\begin{equation} \label{s2}
	|S_2| \leq \max_{0 \leq i \leq s} | \delta_i' | < \max_{0 \leq i \leq s} |\gg_i| \frac{1}{|\ppi|^{n - i (q-1)}} \frac{1}{|\xi_N|^{k -n + i(q-1)}}.
\end{equation}
The reader should compare this with \eqref{s1}. If $\frac{k}{q+1} \geq n$, then for all $0 \leq i \leq s$ we have $\frac{k}{q + 1} \geq n - i (q-1)$. Consequently, for all $0 \leq i \leq s$ we get $|\xi_N^{-1}|^{q(n - i (q-1))} \geq |\xi_N^{-1}|^{k - n + i (q-1)}$ and hence
\[
	|S_1| > |S_2|.
\]

Finally, we consider the sum corresponding to matrices $\gamma_{c, d}\in H \backslash \Gamma$ with $\gamma_{c, d} =  \begin{bmatrix} a & b\\ c & d \end{bmatrix}$, where $a, b, c, d \in A$ satisfy $a d - bc = 1$,  $\deg_T (c) + \deg_T (d) \geq 1$, $\deg_T (a) < \deg_T (c)$, $\deg_T(b) < \deg_T(d)$:
\[
	S_3 =   \sideset{}{^\ast} \sum_{a, b, c, d \in A} \frac{1}{(c \xi_N + d)^k} G_n \left ( u \left ( \frac{a \xi_N + b}{c \xi_N + d} \right) \right).
\]
For such $a, b, c, d \in A$, we have 
\[
	\left | \frac{a \xi_N + b}{c \xi_N + d} \right | < 1
\]
and therefore
\[
	\left | u \left ( \frac{a \xi_N + b}{c \xi_N + d} \right) \right| = \frac{1}{|\ppi|} \left | \frac{c \xi_N + d}{a \xi_N + b} \right |.
\]
So each term in the sum defining $S_3$ has absolute value
\[
	\begin{aligned}
	&  \leq \max_{0 \leq i \leq s} \frac{|\gg_i|}{|\ppi|^{n + i (q-1)}} \frac{1}{|c \xi_N + d|^{k - n + i (q-1)} |a \xi_N + b|^{n + i (q-1)}} \\
	& < \max_{0 \leq i \leq s} \frac{|\gg_i|}{|\ppi|^{n + i (q-1)}} \frac{1}{|\xi_N|^{k-n + i(q-1)}} \qquad \text{(compare with \eqref{s2})} \\
	& \leq |S_1|.
	\end{aligned}
\]
We have $P_{k, n} (\xi_N) = S_1 + S_2 + S_3$ with $|S_1| > |S_2|$, $|S_1| > |S_3|$ and therefore $P_{k, n} (\xi_N) \neq 0$.
\end{proof}

%If $f$ is a function on $\Omega$, then we will write $f \circ \gamma$ for the function $z \mapsto f(\gamma z)$. 

We will show that if the conditions of Remark~\ref{rmk_imp} (and consequently hyperdifferentiation preserves modularity), then the hyperderivatives of $P_{k, 1}$ are again Drinfeld-Poincar\'{e} series. First, we need the following lemma.

\begin{lemma} \label{lem1} For $w_1, w_2, w_3 \geq 0$, we have
\[
	\sum_{r = 0}^{w_1} \binom{w_2 + r}{r} \binom{w_3 - r}{w_1 - r} = \binom{w_2 + w_3 + 1}{w_1}.
\]
\end{lemma}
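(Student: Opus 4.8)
The plan is to prove this as a pure binomial identity by the generating-function method, treating $w_2$ and $w_3$ as fixed parameters and extracting the left-hand side as the coefficient of $x^{w_1}$ in a single power series. First I would form
\[
  F(x) = \sum_{w_1 \geq 0}\left(\sum_{r=0}^{w_1}\binom{w_2+r}{r}\binom{w_3-r}{w_1-r}\right)x^{w_1}
\]
and interchange the order of summation. For a fixed $r \geq 0$ the inner index runs over $w_1 \geq r$, so substituting $m = w_1 - r$ gives
\[
  F(x) = \sum_{r \geq 0}\binom{w_2+r}{r}x^r \sum_{m \geq 0}\binom{w_3-r}{m}x^m.
\]
This interchange is harmless at the level of formal power series because each power $x^{w_1}$ receives contributions from only finitely many pairs $(r,m)$.

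The inner sum is the binomial series $\sum_{m \geq 0}\binom{w_3-r}{m}x^m = (1+x)^{w_3-r}$, so that
\[
  F(x) = (1+x)^{w_3}\sum_{r \geq 0}\binom{w_2+r}{r}\left(\frac{x}{1+x}\right)^r.
\]
Applying the standard identity $\sum_{r \geq 0}\binom{w_2+r}{r}y^r = (1-y)^{-(w_2+1)}$ with $y = x/(1+x)$ and using $1 - \frac{x}{1+x} = \frac{1}{1+x}$, the remaining factor collapses to $(1+x)^{w_2+1}$. Hence $F(x) = (1+x)^{w_3}(1+x)^{w_2+1} = (1+x)^{w_2+w_3+1}$, and reading off the coefficient of $x^{w_1}$ yields $\binom{w_2+w_3+1}{w_1}$, as claimed.

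The one point that needs care, and the only real obstacle, is the meaning of $\binom{w_3-r}{w_1-r}$ when $r > w_3$, so that the top entry is negative. I would fix the convention $\binom{a}{b} = a(a-1)\cdots(a-b+1)/b!$ for any integer $a$ and $b \geq 0$, which is exactly the convention under which the binomial series $(1+x)^{w_3-r}$ is valid for negative exponents; the generating-function computation then produces the correct signed contributions automatically. If one prefers to avoid power series entirely, the same identity follows from two applications of upper negation, $\binom{w_2+r}{r} = (-1)^r\binom{-w_2-1}{r}$ and $\binom{w_3-r}{w_1-r} = (-1)^{w_1-r}\binom{w_1-w_3-1}{w_1-r}$, followed by the Chu--Vandermonde convolution $\sum_r\binom{a}{r}\binom{b}{w_1-r} = \binom{a+b}{w_1}$ and a final upper negation to restore the positive top entry.
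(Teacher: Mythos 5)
Your proof is correct, but it takes a genuinely different route from the paper. The paper proves the lemma by induction on $w_1+w_2+w_3$, splitting into three cases (incrementing $w_1$, $w_2$, or $w_3$) and applying Pascal's rule in each; your argument instead packages the sum as the coefficient of $x^{w_1}$ in $(1+x)^{w_2+w_3+1}$, and your fallback via two upper negations plus Chu--Vandermonde exposes the identity as a disguised Vandermonde convolution, which the inductive proof leaves hidden. Your attention to the convention $\binom{a}{b}=a(a-1)\cdots(a-b+1)/b!$ for negative upper entry $a$ is not pedantry but essential: under the alternative ``truncate to zero'' convention the stated identity is actually false (take $w_1=2$, $w_2=w_3=0$: the left side would be $\binom{0}{0}\binom{0}{2}+\binom{1}{1}\cdot 0+\binom{2}{2}\binom{-2}{0}=1$, while the right side is $\binom{1}{2}=0$; with the polynomial convention the $r=1$ term contributes $\binom{-1}{1}=-1$ and the sum correctly vanishes). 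The paper is silent on this point --- indeed it states Pascal's identity only for $M\geq 0$, $0\leq N\leq M$, yet in the induction applies it to $\binom{w_3-r}{w_1+1-r}$ where the top entry can be negative, so the generalized convention is implicitly in force there too; in the actual application (Theorem~\ref{Poin_thm}, with $w_1=n-i$, $w_3=n-1$) the negative-top terms only occur as $\binom{-1}{0}=1$, so no harm results. What each approach buys: yours is shorter, structural, and pins down exactly where the identity lives in the formal-power-series algebra (your summability remarks --- each term $x^r(1+x)^{w_3-r}$ has order at least $r$, and $y=x/(1+x)$ has zero constant term --- are the right justifications); the paper's induction is elementary and self-contained, using nothing beyond Pascal's rule. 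Both are complete proofs.
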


\begin{proof}
We use induction on $w_1 + w_2 + w_3$ and Pascal's identity
\[
	\binom{M}{N} = \binom{M-1}{N-1} + \binom{M-1}{N}, \qquad M \geq 0, 0 \leq N \leq M.
\]
The result is clear for $w_1 + w_2 + w_3 = 0$. Assume that the result holds for $w_1 + w_2 + w_3$, we consider three cases. 

First, compute by using Pascal's identity and the induction hypothesis:
\[
	\sum_{r = 0}^{w_1 + 1} \binom{w_2 + r}{r} \binom{w_3 - r}{w_1 + 1 - r}  = \sum_{r = 0}^{w_1 + 1} \binom{w_2 + r}{r} \left ( \binom{w_3 - r -1}{w_1 - r} + \binom{w_3 - r - 1}{w_1 + 1 - r} \right) 
\]
\[
\begin{aligned}
	&  = \sum_{r = 0}^{w_1 + 1} \binom{w_2 + r}{r} \binom{w_3 - r -1}{w_1 - r} + \sum_{r = 0}^{w_1 +1} \binom{w_2 + r}{r} \binom{w_3 - 1 - r}{w_1 + 1 - r} \\
	&  = \sum_{r = 0}^{w_1} \binom{w_2 + r}{r} \binom{w_3 - r -1}{w_1 - r} + \sum_{r = 0}^{w_1 +1} \binom{w_2 + r}{r} \binom{w_3 - 1 - r}{w_1 + 1 - r} \\
	& = \binom{w_2 + w_3}{w_1} + \binom{w_2 + w_3}{w_1+1}  = \binom{w_2 + w_3+ 1}{w_1+1}.
\end{aligned}
\]
Similarly, 
\[
	\sum_{r = 0}^{w_1}  \binom{w_2 +1+ r}{r} \binom{w_3 - r}{w_1 - r}  = \sum_{r = 0}^{w_1} \left ( \binom{w_2 + r}{r} + \binom{w_2 + r}{r-1} \right ) \binom{w_3 - r}{w_1 - r} 
\]
\[
\begin{aligned}
	& =  \sum_{r = 0}^{w_1}  \binom{w_2 + r}{r} \binom{w_3 - r}{w_1 - r}  + \sum_{r = 0}^{w_1} \binom{w_2 + r}{r-1}  \binom{w_3 - r}{w_1 - r} \\
	& = \sum_{r = 0}^{w_1}  \binom{w_2 + r}{r} \binom{w_3 - r}{w_1 - r} + \sum_{r = 0}^{w_1 - 1} \binom{w_2 + r+1}{r}  \binom{w_3 - r -1 }{w_1 - r-1} \\
	& = \binom{w_2 + w_3 + 1}{w_1} + \binom{w_2 + w_3 + 1}{w_1 - 1} = \binom{w_2 + w_3 + 2}{w_1}.
	\end{aligned}
\]
And finally,
\[
	\sum_{r = 0}^{w_1} \binom{w_2 + r}{r} \binom{w_3 + 1 -r}{w_1 - r}  = \sum_{r = 0}^{w_1} \binom{w_2 + r}{r} \left ( \binom{w_3 - r}{w_1 - r - 1} + \binom{w_3 - r}{w_1 - r} \right) 
\]
\[
\begin{aligned}
	& = \binom{w_2 + w_3 + 1}{w_1} + \binom{w_2 + w_3 + 1}{w_1 - 1} = \binom{w_2 + w_3 + 2}{w_1}.
	\end{aligned}
\]
\end{proof}

\begin{theorem} \label{Poin_thm}
If $k$ and $n$ are positive integers such that
\[
      \binom{k+n-1}{j} \equiv 0 \bmod p, \qquad \forall j, 1 \leq j \leq n,
\]
 then
\[
	\DDd_n P_{k, 1} = P_{k + 2n, 1 + n}.
\]
\end{theorem}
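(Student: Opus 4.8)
The plan is to differentiate $P_{k,1}$ coset by coset and to verify that $\DDd_n$ carries the $\gamma$-term of $P_{k,1}$ to the $\gamma$-term of $P_{k+2n,1+n}$; summing over $H\backslash\Gamma$ then gives the theorem. Since $G_1(X)=X$, the $\gamma$-term of $P_{k,1}$ is $(\det\gamma)^{m_1}\,\jg^{-k}\,u(\gamma z)$, where $m_1$ is the type of $P_{k,1}$. Writing the entries of $\gamma$ as $a,b,c,d$ in the usual way and expanding $u(\gamma z)=\ppi^{-1}\sum_{e\in A}(\gamma z+e)^{-1}$, I would set $J:=\jg=cz+d$ and $L_e:=(a+ce)z+(b+de)$, so that $\jg^{-k}u(\gamma z)=\ppi^{-1}\sum_{e\in A}J^{-(k-1)}L_e^{-1}$. (Termwise differentiation is legitimate because the series converges uniformly on each admissible subdomain of $\Omega$, so the hyperderivatives may be read off from the $\varepsilon$-expansion term by term.)

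First I would record, from the same computation that yields the displayed formula for $\DDd_n\bigl((\ppi az+\ppi b)^{-w}\bigr)$ in the text, the elementary identity $\DDd_r\bigl((\alpha z+\beta)^{-w}\bigr)=\ppi^{-r}\binom{w+r-1}{r}\alpha^{r}(\alpha z+\beta)^{-(w+r)}$ for any linear form $\alpha z+\beta$. The Leibniz rule \eqref{leibniz} holds for $\DDd_n$ as well as for $\Dunalt_n$, since the factors $(-1)^n/\ppi^n$ multiply correctly; applying it to $J^{-(k-1)}\cdot L_e^{-1}$ and using $\binom{n-r}{n-r}=1$ gives
\[
\DDd_n\bigl(J^{-(k-1)}L_e^{-1}\bigr)=\ppi^{-n}\sum_{r=0}^{n}\binom{k+r-2}{r}\frac{c^{r}(a+ce)^{n-r}}{J^{k-1+r}L_e^{n-r+1}}.
\]

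The decisive observation is the determinant relation $(a+ce)J-cL_e=ad-bc=\det\gamma=:\Delta$, which lets me substitute $(a+ce)^{n-r}=(\Delta+cL_e)^{n-r}J^{-(n-r)}$ and thereby make the power of $J$ equal to $k+n-1$ in every summand. Expanding $(\Delta+cL_e)^{n-r}$ by the binomial theorem and collecting terms by the power $L_e^{-(t+1)}$, the coefficient of $\Delta^{t}c^{\,n-t}J^{-(k+n-1)}L_e^{-(t+1)}$ becomes $\sum_{r=0}^{n-t}\binom{k+r-2}{r}\binom{n-r}{t}$; rewriting $\binom{n-r}{t}=\binom{n-r}{(n-t)-r}$ and invoking Lemma~\ref{lem1} with $w_1=n-t$, $w_2=k-2$, $w_3=n$ collapses this to $\binom{k+n-1}{n-t}$. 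Now the hypothesis enters: because $\binom{k+n-1}{n-t}\equiv 0\bmod p$ for $1\le n-t\le n$, i.e. for $0\le t\le n-1$, and we work in characteristic $p$, every term dies except $t=n$, where $\binom{k+n-1}{0}=1$. Hence
\[
\DDd_n\bigl(\jg^{-k}u(\gamma z)\bigr)=\frac{\Delta^{n}}{\ppi^{n+1}}\sum_{e\in A}\frac{1}{J^{k+n-1}L_e^{n+1}}=\Delta^{n}\,\jg^{-(k+2n)}G_{n+1}(u(\gamma z)),
\]
the last equality being the expansion $G_{n+1}(u(\gamma z))=\ppi^{-(n+1)}\sum_{e}(\gamma z+e)^{-(n+1)}$ read backwards.

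Finally I would reinstate the type factor: the $\gamma$-term of $\DDd_n P_{k,1}$ equals $(\det\gamma)^{m_1}\Delta^{n}\jg^{-(k+2n)}G_{n+1}(u(\gamma z))=\Delta^{m_1+n}\jg^{-(k+2n)}G_{n+1}(u(\gamma z))$, and since $\Delta\in\Fq^\times$ satisfies $\Delta^{q-1}=1$ and $m_1\equiv 1$, the exponent $m_1+n\equiv 1+n$ may be replaced by the type $m_2\equiv 1+n\pmod{q-1}$ of $P_{k+2n,1+n}$. Thus each coset contributes the corresponding $\gamma$-term of $P_{k+2n,1+n}$, and summing over $H\backslash\Gamma$ (again justified by uniform convergence) yields $\DDd_n P_{k,1}=P_{k+2n,1+n}$. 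I expect the combinatorial collapse in the third paragraph—passing from the double sum in $r$ to the single binomial $\binom{k+n-1}{n-t}$ by means of the determinant relation and Lemma~\ref{lem1}, so that the modularity hypothesis annihilates all but the $t=n$ term—to be the main obstacle; the bookkeeping with $\Delta$ and the type, and the interchange of $\DDd_n$ with the coset sum, are routine.
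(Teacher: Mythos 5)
Your proposal is correct, and while its skeleton (termwise hyperdifferentiation of the coset sum, a Leibniz expansion, collapse of a double binomial sum via Lemma~\ref{lem1}, and the congruence hypothesis killing all but the top term) parallels the paper's, the central computation takes a genuinely different route. The paper applies the Leibniz rule to the factorization $\jg^{-k}\cdot(u\circ\gamma)$, computes $\Dunalt_r(\jg^{-k})$ by \eqref{unaltered}, and imports the Bosser--Pellarin transformation formula \eqref{main_eq} for $\Dunalt_{n-r}(u\circ\gamma)$, then invokes Lemma~\ref{lem1} with $(w_1,w_2,w_3)=(n-i,\,k-1,\,n-1)$. You instead expand $u(\gamma z)$ into partial fractions, factor each term as $J^{-(k-1)}L_e^{-1}$, and use the determinant relation $(a+ce)J-cL_e=\det\gamma$ to homogenize the powers of $J$ before collecting in powers of $L_e^{-1}$; this in effect reproves the needed special case of \eqref{main_eq} from scratch, and your invocation of Lemma~\ref{lem1} with $(w_1,w_2,w_3)=(n-t,\,k-2,\,n)$ lands on the same binomial $\binom{k+n-1}{n-t}$ since $w_2+w_3+1=k+n-1$ in both parametrizations. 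What your route buys is self-containedness: no reliance on the quoted formula (20) of Bosser--Pellarin, only the elementary hyperderivative of a linear form. What it costs is an extra layer of analytic bookkeeping, since you must differentiate termwise through the infinite sum over $e\in A$ in addition to the coset sum (your uniform-convergence remark suffices, by the usual non-archimedean Cauchy estimates), whereas the paper interchanges $\Dunalt_n$ only with the coset sum and otherwise manipulates finite identities. Two small points worth making explicit: Lemma~\ref{lem1} as stated requires $w_2=k-2\geq 0$, i.e.\ $k\geq 2$, but this is automatic because the hypothesis at $j=n$ reads $\binom{k+n-1}{n}\equiv 0 \bmod p$, which fails for $k=1$; and your type bookkeeping, replacing $(\det\gamma)^{m_1+n}$ by $(\det\gamma)^{m_2}$ via $\det\gamma\in\Fq^{\times}$, agrees with the paper's convention, where the type is read modulo $q-1$.
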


\begin{proof} For clarity, we write $f \circ \gamma$ for the function $z \mapsto f(\gamma z)$. In this notation
\[
	P_{k, 1}(z) = \sum_{\gamma \in H \backslash \Gamma} \frac{\det \gamma}{\jg^k} (u \circ \gamma)(z).
\]

Note that
\begin{equation} \label{unaltered}
\Dunalt_n \left ( \frac{1}{\jg^w} \right) =  \binom{w + n -1}{n} (-\Xg)^n \frac{1}{\jg^w},
\end{equation}
and recall \cite[(20)]{BoPe} that:
\begin{equation} \label{main_eq}
\left ( \Dunalt_n (f \circ \gamma) \right) (z) = (-\Xg)^n \sum_{i = 1}^n  \binom{n-1}{n-i} (-\Xg)^{-i} \frac{(\det \gamma)^i}{\jg^{2i}} \left ( \Dunalt_i f \right) (\gamma z). 
\end{equation}

By the Leibniz rule \eqref{leibniz} and formula \eqref{unaltered}:
\[
\begin{aligned}
	\Dunalt_n  \left ( \frac{\det \gamma}{\jg^k}  (u \circ \gamma)(z) \right)  & = \sum_{r = 0}^n \det \gamma \Dunalt_r \left ( \frac{1}{\jg^k} \right) \Dunalt_{n-r} (u \circ \gamma) (z) \\
	& =  \sum_{r = 0}^n \det \gamma \binom{k+r-1}{r} \frac{(-\Xg)^r}{\jg^k} \Dunalt_{n-r} (u \circ \gamma)(z).
\end{aligned}
\] 
Formula \eqref{main_eq} shows that this is equal to 
\[
	 \sum_{r = 0}^n \sum_{i = 1}^{n-r} \binom{k+r-1}{r} \binom{n-r-1}{n-r-i} (-\Xg)^{n-i} \frac{\det \gamma^{1+i}}{\jg^{k + 2i}} \left (\Dunalt_{i} u \right) (\gamma z).
\]
We can change the order of summation and rearrange to get
\[
	\sum_{i = 0}^n (-\Xg)^{n-i} \frac{\det \gamma^{1+i}}{\jg^{k + 2i}} \left ( \Dunalt_{i} u \right) (\gamma z) \sum_{r = 0}^{n-i}  \binom{k+r-1}{r} \binom{n-r-1}{n-r-i}.
\]
By taking $w_1 = n-i$, $w_2 = k - 1$, $w_3 = n-1$ in Lemma~\ref{lem1} we get 
\[
	 \sum_{r = 0}^{n-i}  \binom{k+r-1}{r} \binom{n-r-1}{n-r-i} = \binom{k+n-1}{n-i}.
\]
Therefore $\Dunalt_n  \left ( \frac{\det \gamma}{\jg^k}  (u \circ \gamma)(z) \right)$ equals
\[
	\sum_{i = 0}^n \binom{k+n-1}{n-i} (-\Xg)^{n-i} \frac{\det \gamma^{1+i}}{\jg^{k + 2i}} \left ( \Dunalt_{i} u \right) (\gamma z).
\]

In particular, if
\[
  \binom{k + n -1}{j} \equiv 0 \bmod p, \qquad \forall j, \ 1 \leq j \leq n,
\]
then
\begin{equation} \label{better}
 \DDd_n \left ( \frac{\det \gamma}{\jg^k}  (u \circ \gamma)(z) \right) = \frac{\det \gamma^{1+n}}{\jg^{k + 2n}} (\DDd_{n} u) (\gamma z) = \frac{\det \gamma^{1+n}}{\jg^{k + 2n}} G_{n+1} (u(\gamma z)).
\end{equation}
\end{proof}

Combining this with Theorem~\ref{main}, shows that:

\begin{coro}
If $f_s = -P_{2 + s(q-1), 1}$, then for any $k$ and $n$ such that $k - 2 n$ is a positive multiple of $(q-1)$ and $n \leq p^{\val_p (k - n)}$ we have
\[
	f_{k, n} = -P_{k, n}.
\] 
\end{coro}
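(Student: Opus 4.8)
The plan is to obtain the identity by applying the hyperderivative operator $\DDd_{n-1}$ to both sides of the hypothesis $f_s = -P_{2+s(q-1), 1}$, where $s = \frac{k-2n}{q-1}$, and then to recognize the two resulting sides via Theorem~\ref{main} and Theorem~\ref{Poin_thm} respectively. Concretely, I would set $s := \frac{k-2n}{q-1}$, which is a positive integer precisely because $k-2n$ is assumed to be a positive multiple of $(q-1)$; this makes the hypothesis $f_s = -P_{2+s(q-1), 1}$ available in the form we need.

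For the left-hand side, Theorem~\ref{main} gives directly $\DDd_{n-1} f_s = f_{k,n}$ under the stated conditions on $k$ and $n$. For the right-hand side, I would invoke Theorem~\ref{Poin_thm} with its weight parameter taken to be $2+s(q-1)$ and its hyperderivative order taken to be $n-1$. A short index computation then identifies the output as
\[
	P_{(2+s(q-1)) + 2(n-1),\; 1 + (n-1)} = P_{s(q-1)+2n,\; n} = P_{k,n},
\]
where the final equality uses $s(q-1) = k-2n$. Combining these two computations with the linearity of $\DDd_{n-1}$ yields $f_{k,n} = -P_{k,n}$.

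The only point requiring verification is that the congruence hypothesis of Theorem~\ref{Poin_thm} actually holds in this application. With weight $2+s(q-1)$ and order $n-1$, that hypothesis reads $\binom{(2+s(q-1))+(n-1)-1}{j} \equiv 0 \bmod p$ for all $1 \le j \le n-1$. Since $(2+s(q-1))+(n-1)-1 = s(q-1)+n = k-n$, this is exactly the congruence $\binom{k-n}{j} \equiv 0 \bmod p$ for $1 \le j \le n-1$ already recorded as \eqref{eq_in_thm} in the proof of Theorem~\ref{main}, where it was established from the assumption $n \le p^{\val_p(k-n)}$ by Lucas' Theorem. Thus the congruence needed on the Poincar\'{e} side is automatically supplied by the hypotheses of the corollary.

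Accordingly, I do not expect a genuine obstacle here: the statement is a bookkeeping consequence of the two preceding theorems once one observes that the two modularity conditions collapse to the very same congruence $\binom{k-n}{j} \equiv 0 \bmod p$. The one subtlety worth flagging is the alignment of the ranges of $j$: applying Theorem~\ref{Poin_thm} at order $n-1$ demands the congruence only for $1 \le j \le n-1$, which matches precisely the range verified in Theorem~\ref{main}, so that no divisibility beyond the hypothesis $n \le p^{\val_p(k-n)}$ is ever invoked.
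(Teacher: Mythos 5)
Your argument is correct and is essentially the paper's own proof: set $s = \frac{k-2n}{q-1}$, apply $\DDd_{n-1}$ to the hypothesis $f_s = -P_{2+s(q-1),1}$, and identify the two sides via Theorem~\ref{main} and Theorem~\ref{Poin_thm}. Your explicit check that the congruence hypothesis of Theorem~\ref{Poin_thm} at weight $2+s(q-1)$ and order $n-1$ collapses to $\binom{k-n}{j} \equiv 0 \bmod p$ for $1 \leq j \leq n-1$ --- the very condition \eqref{eq_in_thm} already secured by Lucas' Theorem from $n \leq p^{\val_p(k-n)}$ --- is left implicit in the paper's one-line proof, but it is exactly the right bookkeeping.
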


\begin{proof} For such $k$ and $n$, let $s = \frac{k-2n}{q-1}$ as in Theorem~\ref{main}. Then
\[
\begin{aligned}
	f_{k, n} = \DDd_{n-1} f_s = - \DDd_{n-1} P_{2 + s(q-1), 1}  & =  -P_{k, n}.
\end{aligned}	
\]	
\end{proof}

%\begin{rmk}
%B\"{o}ckle has suggested (see \cite[Rem.~12.9]{Bo_shim}) that single-cuspidal forms which are eigenforms (or what is the same for $\GL$ single-cuspidal forms which have $A$-expansions) should be Drinfeld-Poincar\'{e} series. The above theorem gives some indirect evidence that this conjecture is true and that indeed all forms with $A$-expansions are Drinfeld-Poincar\'{e} series.
%\end{rmk}

\begin{rmk}
The equality $P_{k, n} = -f_{k, n}$, which remains to be shown, would be quite remarkable and would show that the Drinfeld-Poincar\'{e} series  $P_{k, n}$ are eigenforms with eigensystems $\{ \wp^n \}$. In addition, this equality shows that one can compute the expansion at infinity of $P_{k, n}$ (a highly non-trivial task, see \cite[Sec.~9]{Gek}). Unfortunately, at present the only equality between single-cuspidal forms with $A$-expansions and Drinfeld-Poincar\'{e} series that we can prove is $h = -P_{q+1, 1}$ (see \cite[9.1]{Gek} and note that there Gekeler writes $h$ for what in our notation is $-h$). Therefore, we have to content ourselves with the equalities for $\DDd_{n-1} h = -P_{k, n}$ when $\DDd_{n-1} h$ is modular. For $q = 3$ for instance, we have:
\[
\begin{aligned}
%\DDd_1 h  & = \sum_{a \in A_+} a^{4} u_a^2 \in S_{6, 0}, \\
P_{16, 7}  & = - \sum_{a \in A_+} a^{9} G_7 (u_a)  = - \frac{1}{T^6 + T^4 + T^2} u^3 - \frac{1}{T^3 - T} u^5 - u^7 + O(u^9). \\
\end{aligned}
\]
\end{rmk}

\bibliographystyle{alpha}
\bibliography{references}

\end{document}